\newtheorem{theorem}{Theorem}
\newtheorem{cor}{Corollary}
\newtheorem{proposition}{Proposition}
\newtheorem{lemma}{Lemma}
\begin{document}

\vspace*{30px}

\begin{center}\Large \textbf{Hyperfibonacci Sequences and Polytopic Numbers} 
\bigskip

\large
Ligia L. Cristea \\
Karl-Franzens-Universit\"at Graz\\
Institute for Mathematics and Scientific Computing\\
Heinrichstrasse 36, A-8010 Graz, Austria\\
strublistea@gmail.com\\
and\\
Ivica Martinjak\\
University of Zagreb, Faculty of Science\\
Bijeni\v cka 32, HR-10000 Zagreb, Croatia\\
imartnjak@phy.hr\\
and\\
Igor Urbiha\\
Zagreb University of Applied Sciences (former Polytechnic of Zagreb)\\
Vrbik 8, HR-10000 Zagreb, Croatia\\
igor.urbiha@tvz.hr

\end{center}

\begin{abstract} 
We prove that the difference between the $n$-th hyperfibonacci number of $r$-th generation and its two consecutive predecessors is the $n$-th regular $(r-1)$-topic number. Using this fact we provide an equivalent recursive definition of hyperfibonacci sequences and derive an extension of the Binet formula. We also prove further identities involving both hyperfibonacci and hyperlucas sequences, in full generality.  
\end{abstract}

\section{Introduction}

The hyperfibonacci sequence of $r$-th generation, denoted by $(F_n^{(r)} )_{n \ge 0}$, is defined by the recurrence relation
\begin{eqnarray}\label{hyperFibo}
F_n^{(r)}  = \sum_{k=0} ^n F_k ^{(r-1)}, \enspace F_n^{(0)}= F_n, \enspace F_0^{(r)}=0, \enspace F_1^{(r)}=1,
\end{eqnarray}
where $r \in \mathbb{N}$ and $F_n$ is the $n$-th term of the Fibonacci sequence. In the same manner one can define hyperlucas sequences. The hyperlucas sequence of $r$-th generation $(L_n^{(r)} )_{n \ge 0}$ is defined by means of the recurrence relation
%\begin{eqnarray}%\label{hyperLucas}
\[
L_n^{(r)}  = \sum_{k=0} ^n L_k ^{(r-1)}, \enspace L_n^{(0)}= L_n, \enspace L_0^{(r)}=2, \enspace L_1^{(r)}=2r+1,
\]
%\end{eqnarray}
where $r \in \mathbb{N}$ and $L_n$ is the $n$-th Lucas number. Table \eqref{tab1} shows the starting terms of the first two generations of the hyperfibonacci and hyperlucas sequences.

\begin{table}[ht!] \label{tab1}
\begin{center}
$
\begin{array}{c||ccccccccccc}
n & 0 & 1 & 2 & 3 & 4 & 5 & 6 & 7 & 8 & 9 & 10\\ \hline\hline
F_n^{(1)} & 0 &  1& 2 & 4 & 7 & 12 & 20 & 33 & 54 & 88 & 143 \\
F_n^{(2)} & 0 &  1& 3& 7 &14 & 26 & 46 & 79 & 133 & 221 & 364  \\\hline
L_n^{(1)} & 2& 3 & 6 & 10 & 17 & 28 & 46 & 75 & 122 &198  & 321\\
L_n^{(2)} & 2 &  5& 11 & 21 & 38 &  66&  112&  187& 309 & 507 & 828
\end{array}
$
\caption{The starting terms of the first two generations of the hyperfibonacci and hyperlucas sequences.}
\end{center}
\end{table}

Dil and Mez\H o recently introduced these sequences in a study of a symmetric algorithm for hyperharmonic, Fibonacci and some other integer sequences \cite{DiMe}. The same authors provide further refinements on this subject \cite{DiMe2}. On the other hand, hyperfibonacci sequences occur naturally as the number of {\it board tilings} with {\it squares} and {\it dominoes}. We let $f_{m}^{(r)}$ denote the number of ways to tile an $m$-board with at least $r$ dominoes. Then, for $n, r \ge 0$ the relation
\begin{eqnarray}\label{hyperFiboBoard}
f_{n+2r}^{(r)} = F_{n+1}^{(r)}
\end{eqnarray}
holds, with $f_0^{(r)}:=1$ \cite{BeBe}. Thus, the $(n+1)$-th hyperfibonacci number of $r$-th generation is equal to the number of $(n+2r)$-board tilings with at least $r$ dominoes. Equivalently, hyperfibonacci sequences represent the number of decompositions of an integer into summands $1$ and $2$, with the constraint on the number of $2$s. This follows immediately from the fact that one can code squares and dominoes with $1$ and $2$, respectively. For example, when $n=3$ and $r=1$ we have $f_5^{(1)}=F_{4}^{(1)}=7$, which is the number of decompositions of $5$, 
\begin{eqnarray*}
5&=&2+2+1=2+1+2=1+2+2=2+1+1+1\\
 &=&1+2+1+1+1= 1+1+2+1=1+1+1+2. 
\end{eqnarray*}
Figure \eqref{Fig1} shows the related $5$-board tilings.

It is worth mentioning that several interesting number theoretical and combinatorial properties of these sequences have already been proven \cite{CaZh, LiZa, ZLZ}. In the following we use these facts in order to establish further identities for the hyperfibonacci sequences.

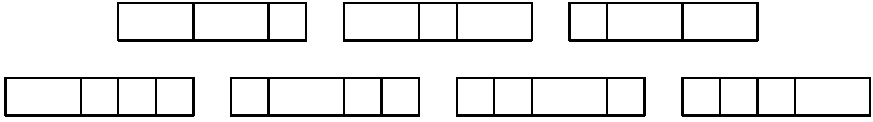
\begin{figure}					
\setlength{\unitlength}{0.5cm}

\begin{picture}(0,3)(-2,0)
\put(3,2){\line(1,0){5}}
\put(5,2){\line(0,1){1}}
\put(7,2){\line(0,1){1}}
\put(8,2){\line(0,1){1}}
\put(3,2){\line(0,1){1}}
\put(3,3){\line(1,0){5}}

\put(9,2){\line(1,0){5}}
\put(11,2){\line(0,1){1}}
\put(12,2){\line(0,1){1}}
\put(14,2){\line(0,1){1}}
\put(9,2){\line(0,1){1}}
\put(9,3){\line(1,0){5}}

\put(15,2){\line(1,0){5}}
\put(16,2){\line(0,1){1}}
\put(18,2){\line(0,1){1}}
\put(20,2){\line(0,1){1}}
\put(15,2){\line(0,1){1}}
\put(15,3){\line(1,0){5}}

\put(0,0){\line(1,0){5}}
\put(2,0){\line(0,1){1}}
\put(3,0){\line(0,1){1}}
\put(4,0){\line(0,1){1}}
\put(5,0){\line(0,1){1}}
\put(0,0){\line(0,1){1}}
\put(0,1){\line(1,0){5}}

\put(6,0){\line(1,0){5}}
\put(7,0){\line(0,1){1}}
\put(9,0){\line(0,1){1}}
\put(10,0){\line(0,1){1}}
\put(11,0){\line(0,1){1}}
\put(6,0){\line(0,1){1}}
\put(6,1){\line(1,0){5}}

\put(12,0){\line(1,0){5}}
\put(13,0){\line(0,1){1}}
\put(14,0){\line(0,1){1}}
\put(16,0){\line(0,1){1}}
\put(17,0){\line(0,1){1}}
\put(12,0){\line(0,1){1}}
\put(12,1){\line(1,0){5}}

\put(18,0){\line(1,0){5}}
\put(19,0){\line(0,1){1}}
\put(20,0){\line(0,1){1}}
\put(21,0){\line(0,1){1}}
\put(23,0){\line(0,1){1}}
\put(18,0){\line(0,1){1}}
\put(18,1){\line(1,0){5}}
\end{picture}

\caption{Combinatorial interpretation of the hyperfibonacci number $F_4^{(1)}$. }\label{Fig1}
\end{figure}

\section{Alternative definition of hyperfibonacci sequences}

One can immediately see that the relation
\begin{eqnarray} \label{eqBasicRechyperFibo}
F_{n}^{(r)} = F_{n-1}^{(r)} + F_{n}^{(r-1)}
\end{eqnarray}
follows from the definition of hyperfibonacci numbers \eqref{hyperFibo}. Now we present a recurrence relation for the $n$-th hyperfibonacci number that involves its two predecessors of the same generation. Lemma \ref{figurateNumbersR1} gives such a relation for $r=1$.

\begin{lemma} \label{figurateNumbersR1} 
	The elements of the sequence $(F_n^{(1)})_{n \ge 0}$ of the first generation of hyperfibonacci numbers satisfy the following recurrence:
\[
F_{n+2}^{(1)}=F_{n+1}^{(1)}+F_{n}^{(1)} + 1.
\]
\end{lemma}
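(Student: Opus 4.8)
The plan is to reduce the claim to the classical closed form for the partial sums of Fibonacci numbers. By the definition \eqref{hyperFibo}, the first-generation term $F_n^{(1)} = \sum_{k=0}^n F_k$ is exactly the $n$-th partial sum of the Fibonacci sequence, so the first step is to establish the well-known identity
\[
\sum_{k=0}^n F_k = F_{n+2} - 1,
\]
which yields the closed form $F_n^{(1)} = F_{n+2}-1$. I would prove this by telescoping: rewriting each summand via the rearranged Fibonacci recurrence as $F_k = F_{k+2}-F_{k+1}$, the sum collapses to $F_{n+2}-F_1 = F_{n+2}-1$. A one-line induction on $n$ would serve equally well.

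With this closed form in hand, the lemma is a direct substitution. The left-hand side becomes $F_{n+2}^{(1)} = F_{n+4}-1$, while the right-hand side becomes
\[
F_{n+1}^{(1)} + F_n^{(1)} + 1 = (F_{n+3}-1) + (F_{n+2}-1) + 1 = F_{n+3} + F_{n+2} - 1.
\]
These two expressions coincide by the Fibonacci recurrence $F_{n+4} = F_{n+3}+F_{n+2}$, which finishes the argument.

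There is essentially no obstacle here; the only content is the partial-sum identity, after which everything is forced by the Fibonacci recurrence. An even shorter route avoids the closed form altogether: specializing the already-derived relation \eqref{eqBasicRechyperFibo} to $r=1$ and index $n+2$ gives $F_{n+2}^{(1)} = F_{n+1}^{(1)} + F_{n+2}$, so the lemma is equivalent to the single identity $F_{n+2} = F_n^{(1)}+1$, which is again just the partial-sum formula. Either way the computation is entirely routine, and the interest of the statement lies less in its proof than in the fact that the additive constant $1$ it isolates is precisely the $n$-th regular $0$-topic number, foreshadowing the general $(r-1)$-topic pattern that the paper develops.
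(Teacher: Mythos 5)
Your proof is correct, but it takes a genuinely different route from the paper's. The paper argues combinatorially via the board-tiling interpretation \eqref{hyperFiboBoard}: among the $(n+3)$-board tilings with at least one domino, those ending in a square are counted by $F_{n+1}^{(1)}$, while those ending in a domino are counted by $F_n^{(1)}$ \emph{plus one}, because fixing the last domino frees the remaining $(n+1)$-board from the domino constraint, and the all-squares tiling must be added back in. Your argument instead goes through the closed form $F_n^{(1)}=F_{n+2}-1$ (the classical partial-sum identity, which you correctly prove independently by telescoping, so there is no circularity with the paper's later equation \eqref{FiboHyperFibo}), after which the recurrence is forced by $F_{n+4}=F_{n+3}+F_{n+2}$; your second route via \eqref{eqBasicRechyperFibo} is equally valid and even shorter. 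What your approach buys is brevity and complete routine-ness for $r=1$. What the paper's approach buys is scalability: the same last-tile decomposition, with the set of tilings having \emph{exactly} $r$ dominoes counted by a binomial coefficient, is precisely the proof of the general Lemma \ref{figurateNumbers}, and it explains combinatorially why the correction term is a polytopic number --- a fact your algebraic method would only recover for general $r$ after first establishing the full Binet-type closed form of Theorem \ref{BinetHyperfibo}. Your closing observation that the constant $1$ is the $0$-topic number $\binom{n+1}{0}$ shows you saw this pattern, but your method does not by itself deliver the general case the way the tiling argument does.
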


\begin{proof}
According to \eqref{hyperFiboBoard} there are $F_{n+2}^{(1)}$ $(n+3)$-board tilings with at least one domino. We consider the last tile in a tiling, which can be either a square or a domino. Tilings that end with a square are obviously equinumerous to tilings of an $(n+2)$-board having at least one domino. The number of such tilings is $F_{n+1}^{(1)}$. 

However, the number of tilings ending with a domino is not equal to $F_n^{(1)}$ since, when fixing the last domino, here we have one $(n+1)$-tiling with all squares. This tiling does not meet the condition on the minimal number of dominoes in a tiling, so we have to add 1 in order to establish the equality.
\end{proof}

With similar arguments one can prove that in the case $r=2$, the relation 
\[
F_{n+2}^{(2)}=F_{n+1}^{(2)}+F_{n}^{(2)} + n+2
\]
holds. We generalize these recurrences in Lemma \ref{figurateNumbers}. We recall that {\it polytopic numbers} are a generalization of square and triangular numbers. These numbers can be represented by a regular geometric arrangement of equally spaced points. The $n$-th regular $r$-topic number $P_n^{(r)}$ is equal to 
\[
P_n^{(r)}=\binom{n+r-1}{r}.
\]

\begin{lemma} \label{figurateNumbers} The difference between the $n$-th $r$-generation hyperfibonacci number and the sum of its two consecutive predecessors is the $n$-th regular $(r-1)$-topic number,
\[
F_{n+2}^{(r)}=F_{n+1}^{(r)}+F_{n}^{(r)} + \binom{n+r}{r-1}, \enspace n \ge 0.
\]
\end{lemma}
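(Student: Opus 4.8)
The plan is to extend the combinatorial (tiling) argument used for Lemma \ref{figurateNumbersR1} to arbitrary generation $r$, since this is the approach that most transparently explains why a binomial coefficient appears. By \eqref{hyperFiboBoard}, $F_{n+2}^{(r)}$ counts the tilings of an $(n+1+2r)$-board that use at least $r$ dominoes, so I would split these tilings according to their last tile.

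First I would dispose of the tilings ending in a square: deleting that square gives a bijection with the tilings of an $(n+2r)$-board using at least $r$ dominoes, and by \eqref{hyperFiboBoard} this count is exactly $F_{n+1}^{(r)}$. Next I would treat the tilings ending in a domino: deleting that domino leaves a tiling of an $(n-1+2r)$-board that now only needs at least $r-1$ dominoes. The key step is to separate these according to whether they still carry at least $r$ dominoes or exactly $r-1$. Those with at least $r$ dominoes are, again by \eqref{hyperFiboBoard}, counted by $F_{n}^{(r)}$, so they contribute the second main term; the tilings with exactly $r-1$ dominoes are precisely the ones that failed the original constraint and must be added back.

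It remains to count the tilings of the $(n-1+2r)$-board that use exactly $r-1$ dominoes. Such a tiling covers $2(r-1)$ cells with dominoes and hence uses $n+1$ squares, for a total of $n+r$ tiles, and it is determined by choosing which $r-1$ of these $n+r$ tiles are the dominoes. This gives $\binom{n+r}{r-1}$ arrangements, the asserted $(r-1)$-topic number, completing the count $F_{n+2}^{(r)} = F_{n+1}^{(r)} + F_{n}^{(r)} + \binom{n+r}{r-1}$. The only delicate point is the bookkeeping of board lengths under \eqref{hyperFiboBoard} when passing between generations $r$ and $r-1$; once the indices are tracked correctly the case $r=1$ reduces to Lemma \ref{figurateNumbersR1} and the general case follows uniformly.

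As an alternative I would give a purely algebraic proof by induction on $r$. Writing $D_n^{(r)} = F_{n+2}^{(r)} - F_{n+1}^{(r)} - F_{n}^{(r)}$ and applying \eqref{eqBasicRechyperFibo} three times yields the Pascal-type recurrence $D_n^{(r)} = D_{n-1}^{(r)} + D_n^{(r-1)}$ for $n \ge 1$, with base values $D_0^{(r)} = r$ and $D_n^{(1)} = 1$ supplied by Lemma \ref{figurateNumbersR1}. Since $\binom{n+r}{r-1}$ satisfies the same recurrence and the same initial conditions by Pascal's identity, the two sequences coincide, and this is essentially the only nontrivial verification in that route.
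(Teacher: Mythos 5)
Your main combinatorial argument is correct and is essentially the paper's own proof: both condition on the last tile of the $(n+2r+1)$-board, identify the square-ending tilings with $F_{n+1}^{(r)}$, and split the domino-ending tilings into those with exactly $r$ dominoes, counted by $\binom{n+r}{r-1}$ since the truncated tiling consists of $n+1$ squares and $r-1$ dominoes, hence $n+r$ tiles, and those with more, counted by $F_{n}^{(r)}$. Your variant of deleting the final domino first and then splitting by whether at least $r$ dominoes remain is the same decomposition read through the obvious bijection.

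Your alternative inductive route is genuinely different from the paper and also sound: three applications of \eqref{eqBasicRechyperFibo} give $D_n^{(r)} = D_{n-1}^{(r)} + D_n^{(r-1)}$ for $n\ge 1$, and $\binom{n+r}{r-1}$ obeys the same Pascal-type recurrence, so the double induction closes once the boundary values agree. The one detail to patch is attribution: $D_n^{(1)}=1$ is indeed supplied by Lemma \ref{figurateNumbersR1}, but $D_0^{(r)} = r$ is not; it requires the separate (easy) check $F_2^{(r)} = r+1$, which follows by induction on $r$ from \eqref{eqBasicRechyperFibo} together with $F_1^{(r)}=1$ and $F_2^{(0)}=1$. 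With that filled in, this second route is complete; it buys independence from the tiling interpretation \eqref{hyperFiboBoard}, at the cost of hiding the combinatorial reason a polytopic number appears, which is exactly what the paper's (and your first) argument makes transparent.
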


\begin{proof} 
Again we use arguments on the last tile in board tilings. First we observe that tilings of an $(n+2r+1)$-board with at least $r$ dominoes, ending with a square are equinumerous to tilings of an $(n+2r)$-board with the same restriction. The latter has $F_{n+1}^{(r)}$ elements. 

We separate the tilings ending with a domino into two disjoint sets $A$ and $B$. The set $A$ consists of  tilings that have exactly $r$ dominoes and the set $B$ contains the rest of tilings, i.e., the tilings having at least $r+1$ dominoes. Having in mind that one domino is fixed, the tilings in the set $B$ are equinumerous to the tilings of an $(n+2r-1)$-board with the same restriction, i.e.,
\[
|B| = F_n^{(r)}.
\]
Now we use the fact that the number of tilings of an $m$-board with $M$ dominoes and $m-2M$ squares is equal to the number of $(m-M)$-combinations over a set of $M$ elements. 

A tiling in the set $A$ has $n+r$ tiles,
\begin{eqnarray*}
(n+2r+1)  - 2 - (r-1) = n+r,
\end{eqnarray*}
which means
\begin{eqnarray*}
|A|=\binom{n+r}{r-1}.
\end{eqnarray*}
The fact that 
\[
F_{n+2}^{(r)} = F_{n+1}^{(r)} + |A| + |B|
\]
completes the proof.
\end{proof}

Figure \ref{Fig2} illustrates this proof in the case $n=5$ and $r=2$.

Note that Lemma \ref{figurateNumbers} provides an equivalent definition of the hyperfibonacci sequences. For $n \ge 0$ and $r \ge 0$ we define the sequence $(F_n^{(r)})_{n \ge 0}$ by the recurrence relation
\[
F_{n+2}^{(r)} = F_{n+1}^{(r)}+F_{n}^{(r)} +P_{n+2}^{(r-1)}, 
\]
and the initial values $\enspace F_0^{(r)}=0, \enspace F_1^{(r)}=1$, where 
\[
P_{n+2}^{(r-1)}=\binom{n+r}{r-1}.
\]

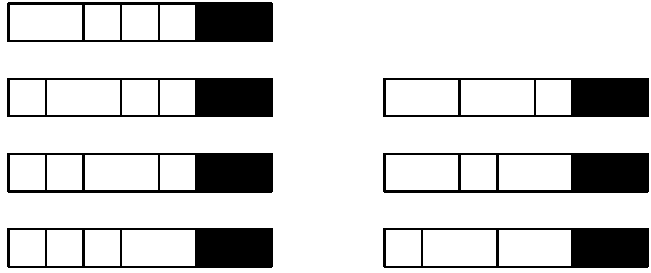
\begin{figure}[h]					
\setlength{\unitlength}{0.5cm}

\begin{picture}(0,10)(-7,0)
\put(0,0){\line(1,0){7}}
\put(0,1){\line(1,0){7}}
\put(0,2){\line(1,0){7}}
\put(0,3){\line(1,0){7}}
\put(0,4){\line(1,0){7}}
\put(0,5){\line(1,0){7}}
\put(0,6){\line(1,0){7}}
\put(0,7){\line(1,0){7}}

\put(0,0){\line(0,1){1}}
\put(7,0){\line(0,1){1}}
\put(0,2){\line(0,1){1}}
\put(7,2){\line(0,1){1}}
\put(0,4){\line(0,1){1}}
\put(7,4){\line(0,1){1}}
\put(0,6){\line(0,1){1}}
\put(7,6){\line(0,1){1}}

\put(1,0){\line(0,1){1}}
\put(2,0){\line(0,1){1}}
\put(3,0){\line(0,1){1}}
\put(5,0){\line(0,1){1}}

\put(1,2){\line(0,1){1}}
\put(2,2){\line(0,1){1}}
\put(4,2){\line(0,1){1}}
\put(5,2){\line(0,1){1}}

\put(1,4){\line(0,1){1}}
\put(3,4){\line(0,1){1}}
\put(4,4){\line(0,1){1}}
\put(5,4){\line(0,1){1}}

\put(2,6){\line(0,1){1}}
\put(3,6){\line(0,1){1}}
\put(4,6){\line(0,1){1}}
\put(5,6){\line(0,1){1}}

\put(11,0){\line(0,1){1}}
\put(13,0){\line(0,1){1}}
\put(15,0){\line(0,1){1}}

\put(12,2){\line(0,1){1}}
\put(13,2){\line(0,1){1}}
\put(15,2){\line(0,1){1}}

\put(12,4){\line(0,1){1}}
\put(14,4){\line(0,1){1}}
\put(15,4){\line(0,1){1}}

\put(10,0){\line(1,0){7}}
\put(10,1){\line(1,0){7}}
\put(10,2){\line(1,0){7}}
\put(10,3){\line(1,0){7}}
\put(10,4){\line(1,0){7}}
\put(10,5){\line(1,0){7}}

\put(10,0){\line(0,1){1}}
\put(17,0){\line(0,1){1}}
\put(10,2){\line(0,1){1}}
\put(17,2){\line(0,1){1}}
\put(10,4){\line(0,1){1}}
\put(17,4){\line(0,1){1}}

\multiput(5,0)(0.01,0){199} {\line(0,1){1}}
\multiput(5,2)(0.01,0){199} {\line(0,1){1}}
\multiput(5,4)(0.01,0){199} {\line(0,1){1}}
\multiput(5,6)(0.01,0){199} {\line(0,1){1}}

\multiput(15,0)(0.01,0){199} {\line(0,1){1}}
\multiput(15,2)(0.01,0){199} {\line(0,1){1}}
\multiput(15,4)(0.01,0){199} {\line(0,1){1}}

\end{picture}
\caption{Tilings in the sets $A$ (left) and $B$ (right) among the $F_4^{(2)}$ 7-board tilings. }\label{Fig2}
\end{figure}

\section{Identities for the $r$-th generation of hyperfibonacci numbers}

In this section we present some identities that hold for every generation of hyperfibonacci numbers. 

\begin{theorem} \label{HyperFiboBinom}
 For the $r$-th generation hyperfibonacci numbers 
\[
\sum_{k=r}^{\lfloor (n + 2r)/2 \rfloor} \binom{n+2r-k}{k} = F_{n+1}^{(r)}.
\]
\end{theorem}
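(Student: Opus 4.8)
The plan is to interpret both sides of the identity combinatorially through the tiling model already established in \eqref{hyperFiboBoard}. By that relation, $F_{n+1}^{(r)}$ equals $f_{n+2r}^{(r)}$, the number of tilings of an $(n+2r)$-board by squares and dominoes that use \emph{at least} $r$ dominoes. So it suffices to count these same tilings in a second way, namely by sorting them according to the exact number of dominoes they contain, and to check that this second count is precisely the sum on the left-hand side.

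First I would fix an integer $k$ and count the tilings of the $(n+2r)$-board that use exactly $k$ dominoes. Such a tiling uses $n+2r-2k$ squares, and therefore consists of $k + (n+2r-2k) = n+2r-k$ tiles in total. Since a tiling is determined by selecting which of these $n+2r-k$ tile positions are occupied by dominoes, the number of tilings with exactly $k$ dominoes is $\binom{n+2r-k}{k}$; this is exactly the counting fact invoked in the proof of Lemma~\ref{figurateNumbers}, specialized to $m=n+2r$ and $M=k$.

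Next I would pin down the range of $k$. The requirement of using at least $r$ dominoes forces $k \ge r$, which supplies the lower limit. The upper limit comes from the observation that $k$ dominoes cover $2k$ cells of the $(n+2r)$-board, so $2k \le n+2r$, that is, $k \le \lfloor (n+2r)/2 \rfloor$. Summing the counts $\binom{n+2r-k}{k}$ over this range partitions all admissible tilings by their domino count, giving
\[
F_{n+1}^{(r)} = f_{n+2r}^{(r)} = \sum_{k=r}^{\lfloor (n+2r)/2 \rfloor} \binom{n+2r-k}{k},
\]
as claimed.

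Because every step is a direct application of the tiling dictionary together with the binomial count of fixed-domino tilings, I do not anticipate a genuine obstacle. The only point requiring care is the bookkeeping at the boundaries of the sum: verifying that the lower index $r$ faithfully encodes the ``at least $r$ dominoes'' constraint, that the floor in the upper index exactly caps the possible number of dominoes, and that degenerate cases (small $n$, where the summation range is short, or values of $k$ where $\binom{n+2r-k}{k}$ happens to vanish) are consistent with the convention $f_0^{(r)}:=1$ fixed in the introduction.
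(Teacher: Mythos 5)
Your proposal is correct and follows essentially the same route as the paper's own proof: both identify $F_{n+1}^{(r)}$ with $f_{n+2r}^{(r)}$ via \eqref{hyperFiboBoard} and then partition the $(n+2r)$-board tilings with at least $r$ dominoes by their exact domino count $k$, using the fact that exactly $k$ dominoes give $\binom{n+2r-k}{k}$ tilings. Your write-up is merely more explicit about the total tile count $n+2r-k$ and the upper limit $2k \le n+2r$, details the paper leaves implicit.
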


\begin{proof}
According to \eqref{hyperFiboBoard}, the r.h.s. of the relation above is equal to $f_{n+2r}^{(r)}$. We use the fact that the number of $(n+2r)$-tilings with $k$ dominoes is equal to 
$\binom{n+2r-k}{k}$.
Having in mind that $f_{n+2r}^{(r)}$ represents the number of $(n+2r)$-tilings with at least $r$ dominoes we sum up the numbers of $(n+2r)$-tilings with $r, r+1, \ldots, \lfloor (n+2r)/2 \rfloor$ dominoes. 
\end{proof}

One of the basic properties of the hyperfibonacci sequence in the case $r=1$ is expressed by the relation
\begin{eqnarray} \label{FiboHyperFibo}
F_n^{(1)} = F_{n+2} - 1,
\end{eqnarray}
that is an immediate consequence of the elementary Fibonacci relation
\begin{eqnarray} \label{FiboSumFirstN}
1 + \sum_{k=0}^n F_k= F_{n+2}.
\end{eqnarray}
Consequently, an extension of the Binet formula to the first generation of  hyperfibonacci numbers is 
\begin{eqnarray} \label{BinetR1}
F_n^{(1)} = \frac{\phi^{n+2}  - \bar{\phi}^{n+2} }{\sqrt{5}} -1
\end{eqnarray}
where $ \phi$ and $\bar{ \phi}$ are the golden ratio and its conjugate, respectively, 
\begin{eqnarray*} \label{GoldenRatio}
 \phi = \frac{1+ \sqrt{5}}{2}, \enspace  \bar{ \phi} = \frac{1- \sqrt{5}}{2}.
\end{eqnarray*} 
Lemma \ref{HypefFiboF1SumFirstN} is an extension of the relation  \eqref{FiboSumFirstN} to the hyperfibonacci sequence for $r=1$.

\begin{lemma}\label{HypefFiboF1SumFirstN}
The sum of the first $n+1$ hyperfibonacci numbers of the first generation is equal to the difference between the $(n+4)$-th Fibonacci number and $(n+3)$,
\[
3 + n + \sum_{k=0}^n F_n ^{(1)} = F_{n+4}, \enspace n \ge 0.
\]
\end{lemma}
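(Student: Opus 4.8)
The plan is to prove the identity directly, reading the sum as $\sum_{k=0}^n F_k^{(1)}$ (the summand must depend on $k$, otherwise the left-hand side could not equal $F_{n+4}$). I would substitute the closed form \eqref{FiboHyperFibo}, namely $F_k^{(1)} = F_{k+2} - 1$, into the sum, which gives $\sum_{k=0}^n F_k^{(1)} = \sum_{k=0}^n F_{k+2} - (n+1)$, the term $-(n+1)$ being the contribution of the $n+1$ copies of the constant $1$.

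The next step is to collapse the remaining Fibonacci sum. Reindexing, $\sum_{k=0}^n F_{k+2} = \sum_{j=0}^{n+2} F_j - F_0 - F_1$, and invoking \eqref{FiboSumFirstN} in the form $\sum_{j=0}^{n+2} F_j = F_{n+4} - 1$ together with $F_0 = 0$ and $F_1 = 1$ yields $\sum_{k=0}^n F_{k+2} = F_{n+4} - 2$. Hence $\sum_{k=0}^n F_k^{(1)} = F_{n+4} - n - 3$, and adding $3 + n$ to both sides gives exactly $F_{n+4}$, as claimed.

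A tidy alternative is induction on $n$: the base case $n=0$ is $3 + F_0^{(1)} = 3 = F_4$, and the inductive step reduces, after adding the single new term to both the sum and the linear part, to the identity $F_{n+1}^{(1)} + 1 = F_{n+3}$, which is just \eqref{FiboHyperFibo} with its index shifted by one. I would favour the direct route, since it uses precisely the two relations displayed just before the statement. I do not anticipate a genuine obstacle; the only place that needs care is the reindexing step, where one must peel off $F_0$ and $F_1$ correctly and notice that the $-(n+1)$ produced by the constant terms is exactly what absorbs the linear term $n$ on the left-hand side.
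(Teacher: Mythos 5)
Your direct argument is correct and is essentially identical to the paper's own proof: substitute $F_k^{(1)} = F_{k+2}-1$ into the sum, collapse the resulting Fibonacci sum via \eqref{FiboSumFirstN}, and absorb the $-(n+1)$ from the constant terms. The paper performs exactly these steps (writing the sum as $\sum_{k=2}^{n+2}F_k - n - 1 = \sum_{k=0}^{n+2}F_k - n - 2 = F_{n+4}-n-3$), so there is nothing to add beyond noting that you also correctly identified the typo $F_n^{(1)}$ for $F_k^{(1)}$ in the statement.
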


\begin{proof}
Using \eqref{FiboHyperFibo} we obtain
\begin{eqnarray*}
F_0^{(1)} + F_1^{(1)} + \cdots + F_n^{(1)} 
           &=& F_2 - 1 +F_3 -1 + \cdots + F_{n+2} - 1\\
           &=& \sum_{k=2}^{n+2} F_k - n -1\\
           &=& \sum_{k=0}^{n+2} F_k - n -2\\
           &=& F_{n+4} - n -3. 
\end{eqnarray*}
\end{proof}

Taking into account the definition of the hyperfibonacci numbers \eqref{hyperFibo}, Lemma \ref{HypefFiboF1SumFirstN} gives
\[
    F_{n}^{(2)} =   F_{n+4} - n -3,
\]
which can also be written as
\[
F_n^{(2)} = \frac{ \phi^{n+4}  - \bar{ \phi}^{n+4} }{\sqrt{5}} -n-3.
\]

Furthermore, we are going to show that the $n$-th hyperfibonacci number of the $r$-th generation $F_n^{(r)}$ is equal to the $(n+2r)$-th Fibonacci number diminished by the sum of $r$ binomial coefficients. This is expressed in Theorem \ref{BinetHyperfibo} and provides an extension of the Binet formula to hyperfibonacci sequences. The proof uses the known Fibonacci relation 
\begin{eqnarray} \label{FiboBinomElementary}
F_n= \sum_{k=0}^{\lfloor \frac{n-1}{2} \rfloor} \binom{n- k -1}{k},
\end{eqnarray}
that also follows from Theorem \ref{HyperFiboBinom} for $r=0$.

\begin{theorem} \label{BinetHyperfibo} The $n$-th hyperfibonacci number of $r$-th generation is equal to the difference between the $(n+2r)$-th Fibonacci number and the sum of $r$ binomial coefficients,
\[
F_n^{(r)}=F_{n+2r}-\sum_{k=0}^{r-1} \binom{n+r+k}{r-1-k}.
\]
\end{theorem}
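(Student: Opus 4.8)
The plan is to prove the identity by induction on $r$, using the summation definition $F_n^{(r)} = \sum_{j=0}^n F_j^{(r-1)}$ from \eqref{hyperFibo} as the engine that connects generation $r$ to generation $r-1$. The base case $r=1$ is exactly the already-established relation \eqref{FiboHyperFibo}, namely $F_n^{(1)}=F_{n+2}-1$; this agrees with the claimed formula, whose sum over $k$ then collapses to the single term $\binom{n+1}{0}=1$.

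For the inductive step I would assume the formula for $r-1$, written as $F_j^{(r-1)} = F_{j+2r-2} - \sum_{k=0}^{r-2}\binom{j+r-1+k}{r-2-k}$, and sum it over $j=0,\ldots,n$. After interchanging the order of summation this splits $F_n^{(r)}$ into a Fibonacci part $\sum_{j=0}^n F_{j+2r-2}$ and a double binomial part $\sum_{k=0}^{r-2}\sum_{j=0}^n\binom{j+r-1+k}{r-2-k}$. The Fibonacci part is handled by taking a difference of two partial sums via \eqref{FiboSumFirstN}, which gives $\sum_{j=0}^n F_{j+2r-2} = F_{n+2r} - F_{2r-1}$, already producing the desired leading term $F_{n+2r}$.

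Each inner binomial sum is then evaluated by the hockey-stick identity, $\sum_{j=0}^n\binom{j+r-1+k}{r-2-k} = \binom{n+r+k}{r-1-k} - \binom{r-1+k}{r-1-k}$. Collecting the first terms over $k=0,\ldots,r-2$ reproduces $\sum_{k=0}^{r-2}\binom{n+r+k}{r-1-k}$, which is precisely the target sum $\sum_{k=0}^{r-1}\binom{n+r+k}{r-1-k}$ with its top term $\binom{n+2r-1}{0}=1$ (the $k=r-1$ contribution) removed. So up to a purely $n$-independent constant the expression already matches the claimed right-hand side.

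The step I expect to be the main obstacle is showing that these leftover constants collapse correctly, namely that $F_{2r-1} - \sum_{k=0}^{r-2}\binom{r-1+k}{r-1-k} = 1$, equivalently $\sum_{k=0}^{r-1}\binom{r-1+k}{r-1-k} = F_{2r-1}$. I would establish this by applying the reflection $k\mapsto r-1-k$, which rewrites the left-hand side as $\sum_{k=0}^{r-1}\binom{2r-2-k}{k}$, and then invoking the Fibonacci identity \eqref{FiboBinomElementary} at index $n=2r-1$. Feeding this constant back balances the Fibonacci and binomial contributions against $F_{n+2r}-\sum_{k=0}^{r-1}\binom{n+r+k}{r-1-k}$, closing the induction.
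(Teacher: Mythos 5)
Your proof is correct, but it follows a genuinely different route from the paper's. The paper inducts on $m=n+r$ using the cross-generation recurrence \eqref{eqBasicRechyperFibo}, $F_{n+1}^{(r)}=F_{n}^{(r)}+F_{n+1}^{(r-1)}$: the inductive step is a single computation in which Pascal's rule merges the two binomial sums and the Fibonacci recurrence merges $F_{n+2r}+F_{n+2r-1}$, at the cost of a more elaborate base-case setup ($r=1,2$ for all $n$, plus $n=0,1$ for all $r$). You instead induct on $r$ alone, summing the inductive hypothesis over $j=0,\dots,n$ via the defining relation \eqref{hyperFibo}; the Fibonacci part is handled by \eqref{FiboSumFirstN} (giving $\sum_{j=0}^{n}F_{j+2r-2}=F_{n+2r}-F_{2r-1}$), the binomial part by the hockey-stick identity, and the leftover constants close up because $\sum_{k=0}^{r-1}\binom{r-1+k}{r-1-k}=\sum_{k=0}^{r-1}\binom{2r-2-k}{k}=F_{2r-1}$ by \eqref{FiboBinomElementary} at index $2r-1$ --- all of which checks out (the endpoint cases $\binom{n+2r-1}{0}=1$ and $\binom{2r-2}{0}=1$ are accounted for correctly). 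It is worth noting that both arguments ultimately lean on \eqref{FiboBinomElementary} for the degenerate constants: the paper uses it at indices $2r$ and $2r+1$ to verify the $n=0,1$ base cases, you use it at $2r-1$ to kill the constant term. Your approach buys a cleaner one-parameter induction that flows directly from the definition and needs only $r=1$ as base case; the paper's buys a shorter inductive computation, since Pascal's rule does all the collapsing in one display, with no need for the hockey-stick identity or partial Fibonacci sums.
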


\begin{proof}  We give a proof by induction on $m=n+r$. The induction base is as follows:

\begin{eqnarray*}
F_{n}^{(1)} &=& \displaystyle F_{n+2} -1 = F_{n+2\cdot 1}-\sum_{k=0}^{0} \binom{n+1-k}{k}\ \  (\mbox{for all $n$})\\
F_{n}^{(2)} &=& \displaystyle F_{n+4} -(n+3) = F_{n+2\cdot 2}-\sum_{k=0}^{1} \binom{n+3-k}{k}\ \  (\mbox{for all $n$}) \\
\end{eqnarray*}
Since by \eqref{FiboBinomElementary} we have
\begin{eqnarray*}
F_{2r} & =& \sum_{k=0}^{r-1} \binom{r+k}{r-1-k}\\
F_{2r+1} &=& 1+\sum_{k=0}^{r-1} \binom{r+1+k}{r-1-k},
\end{eqnarray*}
we also have (for all $r$)
\begin{eqnarray*}
F_{0}^{(r)} &= 0 = &F_{2r}-\sum_{k=0}^{r-1} \binom{r+k}{ r-1-k}\\
F_{1}^{(r)} &=  1 =& F_{2r+1}-\sum_{k=0}^{r-1} \binom{r+1+k}{r-1-k}.
\end{eqnarray*}

The induction step is as follows:
\begin{eqnarray*}
F_{n+1}^{(r)} &=&  F_{n}^{(r)} + F_{n+1}^{(r-1)}\\
 &=& F_{n+2r}-\sum_{k=0}^{r-1} \binom{n+r+k}{r-1-k} + F_{n+2r-1}-\sum_{k=0}^{r-2} \binom{n+r+k}{r-2-k}\\
 &=& F_{n+2r+1} - \binom{n+2r-1}{0}- \sum_{k=0}^{r-2}\left[ \binom{n+r+k}{r-1-k}+ 
 \binom{n+r+k}{r-2-k}\right]\\
 &=& F_{n+1+2r} - \binom{n+2r}{0}-\sum_{k=0}^{r-2} \binom{n+1+r+k}{r-1-k}\\
 & = &F_{n+1+2r} - \sum_{k=0}^{r-1} \binom{n+1+r+k}{r-1-k}.
\end{eqnarray*}
\end{proof}
The following result immediately follows from the relation \eqref{FiboBinomElementary}.
\begin{cor}
	%Using \eqref{FiboBinomElementary} immediately follows:
	\begin{eqnarray} \label{FiboBinomHyper}
	F_{n+1}^{(r)} =
	\sum_{k=0}^{\lfloor \frac{n}{2}\rfloor} \binom{n+r-k}{r+k}.
	\end{eqnarray}
\end{cor}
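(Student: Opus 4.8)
The plan is to obtain the identity as a one-line reindexing of Theorem \ref{HyperFiboBinom}, which already expresses $F_{n+1}^{(r)}$ as a sum of binomial coefficients. Writing the summation variable of that theorem as $j$, we start from
\[
F_{n+1}^{(r)} = \sum_{j=r}^{\lfloor (n+2r)/2 \rfloor} \binom{n+2r-j}{j}.
\]
I would then substitute $j = r+k$, so that the new index $k$ runs from $0$ upward. The summand becomes $\binom{n+2r-(r+k)}{r+k} = \binom{n+r-k}{r+k}$, which is exactly the summand appearing in the corollary.

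The only point requiring a moment of care is the transformation of the upper summation limit. Since $r$ is a nonnegative integer, one has $\lfloor (n+2r)/2 \rfloor = r + \lfloor n/2 \rfloor$, so that the range $r \le j \le \lfloor (n+2r)/2 \rfloor$ becomes precisely $0 \le k \le \lfloor n/2 \rfloor$. Substituting everything yields
\[
F_{n+1}^{(r)} = \sum_{k=0}^{\lfloor n/2 \rfloor} \binom{n+r-k}{r+k},
\]
as claimed. I expect essentially no obstacle here: the entire content of the statement is already carried by Theorem \ref{HyperFiboBinom}, and this corollary merely repackages the same sum under a shifted index.

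Alternatively, following the hint in the text, one can derive the identity from the elementary Fibonacci expansion \eqref{FiboBinomElementary} together with the extended Binet formula of Theorem \ref{BinetHyperfibo}. Applying Theorem \ref{BinetHyperfibo} at argument $n+1$ gives $F_{n+1}^{(r)} = F_{n+1+2r} - \sum_{k=0}^{r-1}\binom{n+r+1+k}{r-1-k}$, and expanding $F_{n+1+2r}$ by \eqref{FiboBinomElementary} produces $\sum_{j=0}^{\lfloor (n+2r)/2\rfloor}\binom{n+2r-j}{j}$. The terms with $j \ge r$ reproduce, after the same shift $j = r+k$, the claimed right-hand side, so the remaining task is to verify that the low-order terms $\sum_{j=0}^{r-1}\binom{n+2r-j}{j}$ cancel exactly against the subtracted sum. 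The only manipulation needed is the reflection $i = r-1-j$, under which $\binom{n+2r-j}{j} = \binom{n+r+1+i}{r-1-i}$, matching the subtracted sum term by term. This route involves slightly more bookkeeping but no genuine difficulty, and I would present the first argument as the main proof.
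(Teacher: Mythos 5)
Your proposal is correct, and your main argument takes a genuinely different (and shorter) route than the paper, while your ``alternative'' sketch is in fact exactly the paper's proof. The paper proceeds as in your second argument: it applies Theorem \ref{BinetHyperfibo} at argument $n+1$, expands $F_{n+1+2r}$ via \eqref{FiboBinomElementary}, rewrites the subtracted sum through the reflection $k \mapsto r-1-k$ as $\sum_{k=0}^{r-1}\binom{n+2r-k}{k}$, cancels the initial segment of the expansion, and shifts the remaining index --- precisely the bookkeeping you outline. Your preferred one-line proof instead observes that the corollary is literally Theorem \ref{HyperFiboBinom} with the summation index shifted by $r$, using $\lfloor (n+2r)/2\rfloor = r + \lfloor n/2\rfloor$ and $\binom{n+2r-(r+k)}{r+k} = \binom{n+r-k}{r+k}$; this is valid and makes transparent that the corollary and Theorem \ref{HyperFiboBinom} are the same identity in different clothing. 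The trade-off: your shortcut inherits its content from the combinatorial (tiling) proof of Theorem \ref{HyperFiboBinom}, whereas the paper's route rederives that identity purely algebraically from the extended Binet formula, so the corollary there serves as a consistency check showing that Theorem \ref{BinetHyperfibo} recovers the combinatorial result. Either argument is complete; no gap in yours.
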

\begin{proof}
\begin{eqnarray*}
	F_{n+1}^{(r)} 
	&=& F_{n+1+2r} - \sum_{k=0}^{r-1} \binom{n+1+r+k}{r-1-k}=\\
	&=& \sum_{k=0}^{\left\lfloor \frac{n+1+2r-1}{2}\right\rfloor} \binom{n+1+2r-k-1}{k}
	- \sum_{k=0}^{r-1} \binom{n+2r-k}{k}=\\
	&=& \sum_{k=0}^{\lfloor \frac{n}{2}\rfloor+r} \binom{n+2r-k}{k}
	- \sum_{k=0}^{r-1} \binom{n+2r-k}{k}=\\
	&=& \sum_{k=r}^{\lfloor \frac{n}{2}\rfloor+r} \binom{n+2r-k}{k}=
	\sum_{k=0}^{\lfloor \frac{n}{2}\rfloor} \binom{n+r-k}{r+k}.
\end{eqnarray*}
\end{proof}

\begin{proposition}
\label{prop:LimitHyperPhi}
For any $r\ge 0$ the hyperfibonacci numbers of $r$-th generation  satisfy
\begin{equation*} 
\lim_{n\to \infty} \frac{F_{n+1}^{(r)}}{F_n^{(r)}}=\phi.
\end{equation*}
\end{proposition}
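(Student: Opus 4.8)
The plan is to reduce everything to the closed form established in Theorem \ref{BinetHyperfibo} and then exploit the contrast between the exponential growth of the Fibonacci numbers and the merely polynomial size of the correction term. The case $r=0$ is just the classical fact that $\lim_{n\to\infty} F_{n+1}/F_n = \phi$, so I may assume $r \ge 1$. Setting $S_r(n) = \sum_{k=0}^{r-1}\binom{n+r+k}{r-1-k}$, Theorem \ref{BinetHyperfibo} reads $F_n^{(r)} = F_{n+2r} - S_r(n)$, whence
\[
\frac{F_{n+1}^{(r)}}{F_n^{(r)}} = \frac{F_{n+2r+1} - S_r(n+1)}{F_{n+2r} - S_r(n)}.
\]

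First I would record that $S_r(n)$ grows only polynomially. For fixed $r$ and $k$, the summand $\binom{n+r+k}{r-1-k}$ is a binomial coefficient with constant lower index $r-1-k$, hence a polynomial in $n$ of degree $r-1-k$; consequently $S_r(n)$ is a polynomial in $n$ of degree $r-1$, its leading behaviour coming from the $k=0$ term $\binom{n+r}{r-1}$. In particular there is a constant $C>0$ with $0 \le S_r(n) \le C\,n^{r-1}$ for all large $n$, and the same bound holds for $S_r(n+1)$ after adjusting $C$.

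Next I would divide numerator and denominator by $F_{n+2r}$, which is positive and increasing, so the ratio is well defined for all large $n$. This gives
\[
\frac{F_{n+1}^{(r)}}{F_n^{(r)}} = \frac{\dfrac{F_{n+2r+1}}{F_{n+2r}} - \dfrac{S_r(n+1)}{F_{n+2r}}}{1 - \dfrac{S_r(n)}{F_{n+2r}}}.
\]
By the classical Binet formula $F_m = (\phi^m - \bar\phi^m)/\sqrt5$ with $|\bar\phi| < 1 < \phi$, the first quotient tends to $\phi$, and $F_{n+2r} \sim \phi^{n+2r}/\sqrt5$ grows exponentially. Since a polynomial divided by an exponential tends to $0$, the estimate $0 \le S_r(n)/F_{n+2r} \le C\,n^{r-1}/F_{n+2r} \to 0$ (and likewise for $S_r(n+1)$) forces both correction quotients to vanish. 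Passing to the limit then yields $(\phi - 0)/(1 - 0) = \phi$, as claimed.

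The only step requiring genuine care is the degree count for $S_r(n)$ combined with the comparison of polynomial and exponential growth; once these are in place, the conclusion is a routine application of the already-proved closed form together with the standard Binet asymptotics.
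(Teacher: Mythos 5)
Your proof is correct and takes essentially the same approach as the paper: both reduce to the closed form of Theorem \ref{BinetHyperfibo}, divide numerator and denominator by $F_{n+2r}$, and kill the degree-$(r-1)$ polynomial correction term against the exponential growth of the Fibonacci numbers via the Binet formula. The only cosmetic difference is that you treat all $r\ge 1$ uniformly, whereas the paper handles $r=1$ separately using the relation $F_n^{(1)}=F_{n+2}-1$.
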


\begin{proof}
For the case $r=0$ we have the known property of Fibonacci numbers 
\begin{equation}
\label{LimitFiboPhi}
\lim_{n\to \infty}\frac{F_{n+1}}{F_n}=\phi.  
\end{equation}
\noindent
In the case $r=1$, one can prove the above relation as follows. By the formula \eqref{FiboHyperFibo}
 we have 
\begin{equation*}
\frac{F_{n+1}^{(1)}}{F_n^{(1)}}=\frac{F_{n+3}-1}{F_{n+2}-1}=\frac{\frac{F_{n+3}}{F_{n+2}}-\frac{1}{F_{n+2}}}{1-\frac{1}{F_{n+2}}}, 
\end{equation*}
and thus, since $\lim_{n \to \infty} \frac{1}{F_n}=0$, by using  \eqref{LimitFiboPhi}  we obtain $\lim_{n \to \infty}\frac{F_{n+1}^{(1)}}{F_{n}^{(1)}}=\phi$. For $r\ge 2$ it is convenient to apply Theorem \ref{BinetHyperfibo}. One can immediately see that the sum $\sum_{k=0}^{r-1} \binom{n+r+k}{r-1-k}$ occurring there is in fact the sum of $r$ polynomials in the variable $n$, of which $\binom{n+r}{r-1}=\frac{(n+r)(n+r+1)\dots(n+2)}{(r-1)!}$ has the highest degree, namely $r-1$. Analogously, by writing the formula in Theorem \ref{BinetHyperfibo} for $F_{n+1}^{(r)}$ we obtain
\begin{equation*}
\frac{F_{n+1}^{(r)}}{F_n^{(r)}}= \frac{\frac{F_{n+1+2r}}{F_{n+2r}}-\frac{Q_{r-1}(n)}{F_{n+2r}}}{1-\frac{R_{r-1}(n)}{F_{n+2r}}},
\end{equation*}
where $Q_{r-1}(n)$ and $R_{r-1}(n)$ are polynomials (with rational coefficients)  in the variable $n$ of degree $r-1$. By using the Binet formula for Fibonacci numbers, we have, for any polynomial $Q_{r-1}(n)$  in $n$ of degree $r-1$ (with $r\ge 1$ arbitrarily fixed),
\begin{equation*}
\displaystyle
\frac{Q_{r-1}(n)}{F_{n+2r}}
=\frac{Q_{r-1}(n)}{\frac{1}{\sqrt{5}}(\phi ^{n+2r}- \bar{\phi} ^{n+2r})} =\frac{\sqrt{5}}{\frac{\phi ^{n+2r}}{Q_{r-1}(n)}-\frac{\bar{\phi} ^{n+2r}}{Q_{r-1}(n)}}.
\end{equation*}
Since $|\phi|>1$ and $|\bar{\phi}|<1$, by basic mathematical analysis facts $\lim_{n\to \infty}\frac{\phi ^{n+2r}}{Q_{r-1}(n)}=\infty,$ $\lim_{n\to \infty}\frac{\bar{\phi} ^{n+2r}}{Q_{r-1}(n)}=0$, and we finally get  
$\lim_{n \to \infty}\frac{Q_{r-1}(n)}{F_{n+2r}}=0$.
This holds for both polynomials of degree $r-1$ that occur in the formula above and thus $\lim_{n \to \infty}\frac{F_{n+1}^{(r)}}{F_n^{(r)}}= \phi$.

\end{proof}

Let $l_m^{(r)}$ denote the number of $m$-{\it bracelet} tilings with squares and with at least $r$ dominoes. An $m$-bracelet tiling is formed from an $m$-board tiling by gluing together the cells $1$ and $n$. A bracelet is said to be {\it in phase} if it ends with a square or a domino. Otherwise, if a domino covers the cells $n$ and $1$, the bracelet is {\it out of phase}. In Lemma \ref{braceletHyperLucas}  we prove a relationship between bracelet tilings with constraint on the minimal number of dominoes and hyperlucas numbers. 

\begin{lemma} \label{braceletHyperLucas} Let $n,r \in \mathbb{N}_0$. Then the number of $(n+2r)$-bracelet tilings with squares and with at least $r$ dominoes is equal to the $n$-th hyperlucas number of the $r$-th generation,
\[
l_{n+2r}^{(r)} = L_{n} ^{(r)}.
\]
\end{lemma}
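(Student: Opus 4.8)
The plan is to mirror the classical in-phase/out-of-phase decomposition of bracelet tilings that yields $l_m = L_m$ in the unrestricted case, while carefully tracking the domino constraint. I would classify each $(n+2r)$-bracelet tiling carrying at least $r$ dominoes according to whether it is in phase or out of phase, count the two classes separately in terms of board tilings, and finally match the total against $L_n^{(r)}$ through an auxiliary identity.

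First, the in-phase tilings are exactly the ordinary $(n+2r)$-board tilings with at least $r$ dominoes, so by relation \eqref{hyperFiboBoard} there are $f_{n+2r}^{(r)} = F_{n+1}^{(r)}$ of them. Second, an out-of-phase tiling has a fixed domino covering the glued cells $n+2r$ and $1$; deleting this domino leaves a genuine board tiling of the $n+2r-2$ middle cells, now required to carry at least $r-1$ dominoes. Since $n+2r-2 = n+2(r-1)$, applying \eqref{hyperFiboBoard} in generation $r-1$ gives exactly $f_{n+2(r-1)}^{(r-1)} = F_{n+1}^{(r-1)}$ such tilings. Adding the two disjoint cases yields, for $r \ge 1$,
\[
l_{n+2r}^{(r)} = F_{n+1}^{(r)} + F_{n+1}^{(r-1)}.
\]

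It then remains to identify the right-hand side with $L_n^{(r)}$, which is where the real work lies. I would prove $L_n^{(r)} = F_{n+1}^{(r)} + F_{n+1}^{(r-1)}$ by induction on $r$. For the base case $r=1$ one has $L_n^{(1)} = \sum_{k=0}^n L_k = L_{n+2}-1$ and $F_{n+1}^{(1)} = F_{n+3}-1$ by \eqref{FiboHyperFibo}, so the claim reduces to the classical identity $L_{n+2} = F_{n+1} + F_{n+3}$. For the inductive step I would expand $L_n^{(r)} = \sum_{k=0}^n L_k^{(r-1)}$, substitute the hypothesis $L_k^{(r-1)} = F_{k+1}^{(r-1)} + F_{k+1}^{(r-2)}$, and collapse each resulting sum via $\sum_{k=0}^n F_{k+1}^{(s)} = F_{n+1}^{(s+1)}$, which follows from the defining summation \eqref{hyperFibo} together with $F_0^{(s)} = 0$. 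This gives $L_n^{(r)} = F_{n+1}^{(r)} + F_{n+1}^{(r-1)}$ and settles the case $r \ge 1$.

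The case $r=0$ must be treated separately, since then the out-of-phase subtraction produces the undefined generation $-1$; here the decomposition gives $l_n^{(0)} = f_n + f_{n-2} = F_{n+1} + F_{n-1} = L_n = L_n^{(0)}$, the classical bracelet interpretation of the Lucas numbers. I expect the main obstacle to be the bookkeeping in the out-of-phase case, namely confirming that fixing the wrap-around domino leaves precisely an $(n+2(r-1))$-board with the relaxed threshold $r-1$, together with the telescoping step of the identity $L_n^{(r)} = F_{n+1}^{(r)} + F_{n+1}^{(r-1)}$, which is the one place where the hyperfibonacci and hyperlucas recurrences must be made to interact.
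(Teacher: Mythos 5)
Your proposal is correct, but it takes a genuinely different route from the paper's proof. The paper never expresses the bracelet counts through hyperfibonacci numbers: it shows, by the same last-tile argument, that the quantities $l_{n+2r}^{(r)}$ obey the recurrence \eqref{eqBasicRechyperFibo} defining the hyperlucas array, $l_{n+2r}^{(r)}=l_{(n-1)+2r}^{(r)}+l_{n+2(r-1)}^{(r-1)}$, and then verifies the two boundary lines of the implicit double induction, namely $l_{2r}^{(r)}=2=L_0^{(r)}$ (the two all-domino bracelets, one in phase and one out of phase) and $l_n^{(0)}=l_n=L_n$ (the classical Lucas bracelets). You instead split the bracelets by phase and apply \eqref{hyperFiboBoard} twice to get $l_{n+2r}^{(r)}=F_{n+1}^{(r)}+F_{n+1}^{(r-1)}$ for $r\ge 1$, and then prove the companion identity $L_n^{(r)}=F_{n+1}^{(r)}+F_{n+1}^{(r-1)}$ by induction on $r$; all steps check out (the base case reduces to \eqref{theMostElemFiboLucas}, and the telescoping $\sum_{k=0}^{n}F_{k+1}^{(s)}=F_{n+1}^{(s+1)}$ is valid because $F_0^{(s)}=0$). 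The paper's route is shorter and entirely self-contained: it needs nothing about hyperfibonacci numbers, only matching recurrences and boundary values. Your route costs an extra induction but buys a standalone identity of independent interest, $L_n^{(r)}=F_{n+1}^{(r)}+F_{n+1}^{(r-1)}$, a generalization of $L_m=F_{m-1}+F_{m+1}$ different from Theorem \ref{hyperfibohyperlucas}; moreover, your phase decomposition is exactly the first step of the paper's proof of Theorem \ref{hyperfibohyperlucas}, and comparing the two evaluations of the out-of-phase class yields $F_{n+1}^{(r-1)}=F_{n-1}^{(r)}+\binom{n+r-1}{r-1}$, which, in view of \eqref{eqBasicRechyperFibo}, is Lemma \ref{figurateNumbers} shifted by one. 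The only caveat, shared with the paper's own treatment of $r=0$, is that the classical bracelet interpretation $l_n=L_n$ relies on the usual conventions for very short bracelets ($l_0=2$, $l_1=1$).
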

\begin{proof}
In the first step of the proof we show that the sequence of numbers $l_n^{(r)}$ obeys the same recurrence relation  \eqref{eqBasicRechyperFibo} as the hyperfibonacci numbers,
\begin{eqnarray*}
l_n^{(r)} = l_{n-1} ^{(r)} + l_n^{(r-1)}.
\end{eqnarray*}

We consider the last tile in an $(n+2r)$-bracelet tiling. If the $(n+2r)$-bracelet ends with a square, then the remaining $(n+2r-1)$-bracelet can be tiled in $l_{n+2r-1}^{(r)}$ ways. Otherwise, if it ends with a domino there are $l_{n+2r-2}^{(r-1)}$  possible tilings of the remaining $(n+2r-2)$-bracelet. Thus, bracelet tilings satisfy the same recurrence relation \eqref{eqBasicRechyperFibo} as hyperfibonacci numbers.

Now we are testing the initial condition. For $n=0$, there are two $2r$-bracelet tilings, one in phase and another one out of phase, with at least $r$ dominoes, thus $l_0^{(r)}=2$ and consequently $l_0^{(r)}=L_0^{(r)}$. For  $r=0$ there is no constraint on the number of dominoes and clearly we have $l_n^{(0)}= l_n=L_n$. This reasoning completes the proof.
\end{proof}

Theorem \ref{hyperfibohyperlucas} gives an analogue of the most elementary relation encountering both Fibonacci and Lucas numbers, that is 
\begin{eqnarray}\label{theMostElemFiboLucas}
L_n= F_{n-1} + F_{n+1}. 
\end{eqnarray}
When proving it, we use the combinatorial interpretation of hyperfibonacci numbers as the number of board tilings and the interpretation of hyperlucas numbers as the number of bracelet tilings.

\begin{theorem} \label{hyperfibohyperlucas}
The elements of the hyperfibonacci and the hyperlucas sequences satisfy the following formula:
\[
L_n^{(r)} =  F_{n-1}^{(r)} + F_{n+1} ^{(r)} +  \binom{n+r-1}{r-1}.
\]
\end{theorem}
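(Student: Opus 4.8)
The plan is to prove this combinatorially, mirroring the standard argument behind the classical identity $L_n = F_{n-1} + F_{n+1}$ in \eqref{theMostElemFiboLucas}, and using the two tiling models already set up in the paper. By \eqref{hyperFiboBoard} the hyperfibonacci number $F_{m}^{(r)}$ counts $(m-1+2r)$-board tilings with at least $r$ dominoes, and by Lemma \ref{braceletHyperLucas} the left-hand side equals $L_n^{(r)} = l_{n+2r}^{(r)}$, the number of $(n+2r)$-bracelet tilings with squares and at least $r$ dominoes. So I would begin by reading the left-hand side as a count of such bracelets and then classify them by phase.

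An $(n+2r)$-bracelet tiling with at least $r$ dominoes is either in phase or out of phase. The in-phase ones cut open into ordinary $(n+2r)$-board tilings with at least $r$ dominoes, and these number $f_{n+2r}^{(r)} = F_{n+1}^{(r)}$, which supplies the $F_{n+1}^{(r)}$ term. The out-of-phase ones are exactly those in which a single domino covers cells $n+2r$ and $1$; deleting that fixed domino leaves a $(n+2r-2)$-board tiling which, since the whole bracelet carried at least $r$ dominoes, must carry at least $r-1$. Hence the out-of-phase tilings number $f_{n+2r-2}^{(r-1)}$.

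The one nonroutine move, and the step I expect to be the crux, is that $f_{n+2r-2}^{(r-1)}$ is not simply $F_{n-1}^{(r)}$: it overcounts by precisely the inner tilings that use only $r-1$ dominoes. I would therefore split these inner $(n+2r-2)$-board tilings according to whether they have at least $r$ dominoes, contributing $f_{n+2r-2}^{(r)} = F_{n-1}^{(r)}$, or exactly $r-1$ dominoes. In the latter case such a tiling uses $r-1$ dominoes and $n$ squares, hence $n+r-1$ tiles in total, so the number of arrangements is $\binom{n+r-1}{r-1}$ by the same "fix the dominoes among the tiles" count used in the proof of Lemma \ref{figurateNumbers}. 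This is exactly the binomial correction term, and it is conceptually the analogue of the extra $+1$ in Lemma \ref{figurateNumbersR1}.

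Adding the three contributions yields $L_n^{(r)} = F_{n+1}^{(r)} + F_{n-1}^{(r)} + \binom{n+r-1}{r-1}$, as claimed. The only points that need care are the bookkeeping of board lengths under \eqref{hyperFiboBoard} (verifying $f_{n+2r}^{(r)} = F_{n+1}^{(r)}$ and $f_{n+2r-2}^{(r)} = F_{n-1}^{(r)}$), and confirming that the in-phase/out-of-phase split, followed by the sub-classification by domino count, partitions the bracelet tilings with no overlap and no omission.
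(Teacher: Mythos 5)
Your proposal is correct and follows essentially the same argument as the paper: interpret $L_n^{(r)}$ as $(n+2r)$-bracelet tilings via Lemma \ref{braceletHyperLucas}, split by phase (in-phase giving $F_{n+1}^{(r)}$), then split the out-of-phase tilings by domino count into those contributing $F_{n-1}^{(r)}$ and those with the minimal number of dominoes contributing $\binom{n+r-1}{r-1}$. Your bookkeeping (splitting the inner $(n+2r-2)$-board tilings into ``at least $r$'' versus ``exactly $r-1$'' dominoes) is in fact a cleaner statement of the paper's partition into sets $A$ and $B$, whose description in the paper contains a small typo.
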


\begin{proof}
Obviously, there is a one to one correspondence between the number of $(n+2r)$-board tilings and in phase bracelets of the same length. Let us separate the out of phase $(n+2r)$-bracelet tilings into two disjoint sets, such that the set $A$ contains tilings with exactly $r$ dominoes and the set $B$ contains tilings with at least $r-1$ dominoes.
Having in mind that one domino is fixed, the tilings in $B$ correspond to $(n+2r-2)$-board tilings with at least $r$ dominoes, thus 
\[
|B| = f_{n+2r-2}^{(r)} =  F_{n-1}^{(r)}.
\]

Since the tilings in $A$ consist of a number of $(n+2r-2)-(r-1)=n+r-1$  tiles, $r$ of them being dominoes (one of them is a fixed domino), we have
\[
|A| = \binom{n+r-1}{r-1}.
\]
The fact that 
\begin{eqnarray*}
L_{n}^{(r)} = F_{n+1}^{(r)}+ |A| + |B|
\end{eqnarray*}
completes the proof.
\end{proof}

Possibly, Theorem \ref{hyperfibohyperlucas} can be used to prove further hyperfibonacci - hyperlucas identities.

\section{Some identities for the first generation}

In this section we demonstrate that there are Cassini-like formulas for the first generation of hyperfibonacci sequences. We were able to prove both Theorem \ref{CassiniHyperfibo} and Theorem \ref{CatalanHyperfibo} using the extension \eqref{BinetR1} of the Binet formula to the hyperfibonacci sequence in the case $r=1$. We also provide elegant proofs employing the Cassini identity \cite{Voll, WeZe} for the Fibonacci sequence,
\begin{eqnarray} \label{CassiniId}
F_{n-1}F_{n+1} - F_n^2 = (-1)^n, \enspace n \ge 0
\end{eqnarray}
and its generalization 
\begin{eqnarray} \label{CatalanId}
 F_n^2 - F_{n-r}F_{n+r}= (-1)^{n-r} F_r^2, \enspace n \ge 0, \enspace r \in \mathbb{N}_0
\end{eqnarray}
known as the Catalan identity.

In particular, we are going to show that for even $n$ the difference of the square of the $n$-th hyperfibonacci number $F_n^{(1)}$ and the product of its two neighbors is equal to $F_{n-3}^{(1)}$,  
\begin{eqnarray*}
{F_n^{(1)}}^2 - F_{n-1}^{(1)} F_{n+1}^{(1)} &=& F_{n-3}^{(1)}. 
\end{eqnarray*}
Otherwise, when $n$ is odd the relation 
\begin{eqnarray*}
{F_n^{(1)}}^2 - F_{n-1}^{(1)} F_{n+1}^{(1)}&=& F_{n-3}^{(1)} +2
\end{eqnarray*}
holds. We prove these facts in Theorem \ref{CassiniHyperfibo} by means of the Cassini identity. 

\begin{theorem} \label{CassiniHyperfibo} For the first generation of hyperfibonacci numbers and $n \ge 3$ the following identity holds:
\[
{F_n^{(1)}}^2 - F_{n-1}^{(1)} F_{n+1}^{(1)} = F_{n-3}^{(1)} + 1 + (-1)^{n+1}.
\]
\end{theorem}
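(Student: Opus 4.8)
The plan is to collapse the identity to a statement purely about Fibonacci numbers by means of the relation \eqref{FiboHyperFibo}, namely $F_m^{(1)} = F_{m+2}-1$, and then to invoke the Cassini identity \eqref{CassiniId}. First I would rewrite each of the three hyperfibonacci terms on the left-hand side in this way, which gives
\[
{F_n^{(1)}}^2 - F_{n-1}^{(1)} F_{n+1}^{(1)} = (F_{n+2}-1)^2 - (F_{n+1}-1)(F_{n+3}-1).
\]

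Next I would expand both products and split the result into a quadratic part and a linear part. The quadratic part is $F_{n+2}^2 - F_{n+1}F_{n+3}$; applying the Cassini identity \eqref{CassiniId} at the shifted index $n+2$, so that $F_{n+1}F_{n+3} - F_{n+2}^2 = (-1)^{n+2} = (-1)^n$, this part equals $(-1)^{n+1}$. The linear part is $-2F_{n+2} + F_{n+1} + F_{n+3}$; using the Fibonacci recurrence $F_{n+3}=F_{n+2}+F_{n+1}$ and then $F_{n+2}=F_{n+1}+F_n$ collapses it to $F_{n-1}$. Hence the left-hand side equals $F_{n-1} + (-1)^{n+1}$.

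Finally, I would translate $F_{n-1}$ back into hyperfibonacci form: by \eqref{FiboHyperFibo} with $m=n-3$ we have $F_{n-3}^{(1)} = F_{n-1}-1$, so $F_{n-1} = F_{n-3}^{(1)}+1$. Substituting yields exactly $F_{n-3}^{(1)} + 1 + (-1)^{n+1}$, which is the claimed right-hand side; the constant $+1$ in the statement is precisely the gap $F_{n-1}-F_{n-3}^{(1)}$, and the hypothesis $n\ge 3$ is what makes $F_{n-3}^{(1)}$ well defined.

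The argument is entirely elementary, so there is no genuine obstacle, only bookkeeping: the one place that demands care is tracking the index shift and sign in Cassini, i.e. confirming that the relevant instance is at index $n+2$ and that the surviving sign is $(-1)^{n+1}$ rather than $(-1)^{n}$. An alternative is to substitute the Binet-type formula \eqref{BinetR1} for each $F_\bullet^{(1)}$ and expand directly; this also works, but it is messier because of the cross terms between the $\phi$- and $\bar\phi$-powers and the additive $-1$, so I would favor the Cassini-based reduction as the cleaner route.
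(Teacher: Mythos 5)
Your proposal is correct and follows essentially the same route as the paper's own proof: reduce to Fibonacci numbers via $F_m^{(1)}=F_{m+2}-1$, expand, apply Cassini at index $n+2$ to get $(-1)^{n+1}$, collapse the linear terms to $F_{n-1}$, and translate back to $F_{n-3}^{(1)}+1$. The only cosmetic difference is that the paper groups the linear part as $(F_{n+3}-F_{n+2})-(F_{n+2}-F_{n+1})=F_{n+1}-F_n$ rather than applying the recurrence twice, which is the same computation.
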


\begin{proof}
Using \eqref{FiboHyperFibo} and \eqref{CassiniId}
we have
\begin{eqnarray*}
{F_{n}^{(1)}}^2-F_{n-1}^{(1)}F_{n+1}^{(1)} &=& (F_{n+2}-1)^2-(F_{n+1}-1)(F_{n+3}-1)\\
& = & F_{n+2}^2-2F_{n+2}+1 - (F_{n+1}F_{n+3}-F_{n+1}-F_{n+3}+1)\\
&= &F_{n+2}^2-F_{n+1}F_{n+3}+(F_{n+3}-F_{n+2})-(F_{n+2}-F_{n+1})\\
&= &-(-1)^{n+2}+F_{n+1}-F_n \\
&=&(-1)^{n+1}+F_{n-1}\\
&= &1+(-1)^{n+1}+F_{n-1}-1\\
&=&1+(-1)^{n+1}+F_{n-3}^{(1)}.
\end{eqnarray*}
\end{proof}
 
There is also regularity in the difference between the square of a hyperfibonacci number of the first generation and the product of its two second neighbors. Namely, according to the closed formula for the first generation hyperfibonacci numbers \eqref{BinetR1} we get
\begin{eqnarray*}
& &{F_n^{(1)}}^2 - F_{n-2}^{(1)} F_{n+2}^{(1)}  = \\
&& \Bigg (\frac{   \phi^{n+2} - \bar{ \phi}^{n+2}  }{\sqrt{5}} - 1 \Bigg )^2 - \Bigg (\frac{   \phi^{n} - \bar{ \phi}^{n}  }{\sqrt{5}} - 1 \Bigg ) \Bigg (\frac{   \phi^{n+4} - \bar{ \phi}^{n+4}  }{\sqrt{5}} - 1 \Bigg )\\
&=& \frac{  ( \phi^{n+2} - \bar{ \phi}^{n+2} )^2 }{{5}}  - \frac{ 2(  \phi^{n+2} - \bar{ \phi}^{n+2} ) }{\sqrt{5}} + 1 -\frac{1}{5} \Big [   \phi^{2n+4} + \bar{ \phi}^{2n+4} -(-1)^{n}( \phi^4 + \bar{ \phi}^4)   \\
& &- \sqrt{5}(  \phi^{n+4} - \bar{ \phi}^{n+4})  - \sqrt{5}( \phi^{n} - \bar{ \phi}^{n})         +5    \Big ]\\
&=& (-1)^{n} + \frac{ \phi^{n+4} - \bar{ \phi}^{n+4}  }{\sqrt{5}}   + \frac{ \phi^{n} - \bar{ \phi}^{n}  }{\sqrt{5}} - 2 \frac{ \phi^{n+2} - \bar{ \phi}^{n+2}  }{\sqrt{5}}
\end{eqnarray*}
which is equal to 
\begin{eqnarray*}
(-1)^{n} + F_{n+3} + F_{n+2} + F_n - 2(F_{n+1} + F_n)\\
\end{eqnarray*}
and finally to
\begin{eqnarray*}
(-1)^{n}  + F_{n+2} &=& (-1)^{n} +1 + F_n^{(1)}.
\end{eqnarray*}
This result is expressed in Theorem \ref{CatalanHyperfibo}. We also provide a proof by means of the Catalan identity.

\begin{theorem} \label{CatalanHyperfibo} For the first generation of  hyperfibonacci numbers and $n \ge 2$
\[
{F_n^{(1)}}^2 - F_{n-2}^{(1)} F_{n+2}^{(1)} = F_{n}^{(1)} + 1 + (-1)^n.
\]
\end{theorem}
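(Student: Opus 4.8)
The cleanest route is to mirror exactly the computation the authors already carried out with the Binet formula, but to replace it with the Catalan identity \eqref{CatalanId} in order to get a self-contained algebraic derivation. The plan is to start from the relation \eqref{FiboHyperFibo}, namely $F_m^{(1)} = F_{m+2} - 1$, and substitute it into the left-hand side $\bigl(F_n^{(1)}\bigr)^2 - F_{n-2}^{(1)} F_{n+2}^{(1)}$. This converts everything into an expression purely in ordinary Fibonacci numbers:
\[
\bigl(F_{n+2}-1\bigr)^2 - \bigl(F_{n}-1\bigr)\bigl(F_{n+4}-1\bigr).
\]
First I would expand both products and collect terms, separating the quadratic part $F_{n+2}^2 - F_n F_{n+4}$ from the linear part. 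The linear remainder will be $-2F_{n+2} + F_n + F_{n+4}$, and the constant terms cancel ($+1$ against $-1$), so after expansion the whole thing reduces to
\[
\bigl(F_{n+2}^2 - F_n F_{n+4}\bigr) + \bigl(F_{n+4} + F_n - 2F_{n+2}\bigr).
\]

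Next I would evaluate the two groups separately. For the quadratic group, I would apply the Catalan identity \eqref{CatalanId} with the shift parameter equal to $2$, centered at index $n+2$: this gives $F_{n+2}^2 - F_n F_{n+4} = (-1)^{n} F_2^2 = (-1)^n$, since $F_2 = 1$. (One must double-check the sign: \eqref{CatalanId} reads $F_m^2 - F_{m-r}F_{m+r} = (-1)^{m-r}F_r^2$, so with $m = n+2$, $r = 2$ the sign is $(-1)^{n}$, matching the target.) For the linear group, I would use the basic Fibonacci recurrence repeatedly: $F_{n+4} = F_{n+3} + F_{n+2}$ and $F_{n+3} = F_{n+2} + F_{n+1}$, so $F_{n+4} - 2F_{n+2} = F_{n+1} - F_{n+2} + \bigl(F_{n+3}-F_{n+2}\bigr)$, which telescopes down. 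The cleanest bookkeeping is to observe $F_{n+4} + F_n - 2F_{n+2} = (F_{n+4} - F_{n+2}) - (F_{n+2} - F_n) = F_{n+3} - F_{n+1} = F_{n+2}$. Thus the linear group equals $F_{n+2}$.

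Combining the two groups yields $(-1)^n + F_{n+2}$, and one final application of \eqref{FiboHyperFibo} in reverse, $F_{n+2} = F_n^{(1)} + 1$, gives exactly $(-1)^n + 1 + F_n^{(1)}$, which is the claimed identity. I expect no serious obstacle here; the only place that demands care is the sign tracking in the Catalan step (getting $(-1)^{m-r}$ with the correct parity) and the telescoping of the linear Fibonacci terms, both of which are routine once the indices are pinned down. An alternative, as the authors note, is to substitute \eqref{BinetR1} directly and simplify using $\phi\bar\phi = -1$ and $\phi^2 + \bar\phi^2 = 3$; that path is slightly longer but equally elementary, and the excerpt already displays it in full, so presenting the Catalan-identity proof gives the more compact argument.
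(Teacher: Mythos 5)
Your proposal is correct and follows essentially the same route as the paper's own proof: substituting $F_m^{(1)} = F_{m+2}-1$, grouping the quadratic part for the Catalan identity with shift $2$ (yielding $(-1)^n F_2^2$) and telescoping the linear part to $F_{n+2}$, then converting back. Even the index bookkeeping matches the paper line for line, so there is nothing to add or correct.
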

 
\begin{proof} Using \eqref{FiboHyperFibo} and \eqref{CatalanId} we have
\begin{eqnarray*}
{F_{n}^{(1)}}^2 - F_{n-2}^{(1)}F_{n+2}^{(1)} &=& (F_{n+2}-1)^2-(F_{n}-1)(F_{n+4}-1)\\
&=&F_{n+2}^2-2F_{n+2}+1 - (F_{n}F_{n+4}-F_{n}-F_{n+4}+1)\\
&=&F_{n+2}^2-F_{n}F_{n+4}+(F_{n+4}-F_{n+2})-(F_{n+2}-F_{n})\\
&=&(-1)^{n}F_2^2+F_{n+3}-F_{n+1}\\
&=&(-1)^{n}+F_{n+2}\\
&=&1+(-1)^{n}+F_{n}^{(1)}. 
\end{eqnarray*}
\end{proof}

%\noindent
\section{Acknowledgments} 
We thank the referees for valuable comments that helped us improve the quality of the paper. 
\\
Ligia L. Cristea is founded by the Austrian Science and Research Fund FWF, Project P27050-N26.

\noindent {2000 \it AMS Mathematical Subject Classifications:} Primary 05A17; Secondary 11P84.\\
{\it Keywords:} Fibonacci sequence, hyperfibonacci sequence, hyperlucas sequence, Binet formula, polytopic number.\\
(Concerned with sequences A000045, A000071, A001924, A014162, A014166.)

\end{document}